\documentclass[12pt]{article}
\usepackage{textcomp}
\usepackage{graphicx}           
\usepackage{epstopdf}
\usepackage{color}              
\usepackage{xcolor,graphicx}
\usepackage{geometry}
\usepackage{float}
\usepackage{times}
\usepackage{indentfirst}        
\usepackage{amsmath,amssymb,bm} 
\usepackage{amsthm}
\usepackage{cases}              
\usepackage{setspace}
 
\usepackage{titlesec}
\usepackage[all]{xy}            
\usepackage{tikz}
\usetikzlibrary{arrows.meta}
\usetikzlibrary{shapes.gates.ee}
\usetikzlibrary{calc}
\usepackage{multirow}
\usepackage{graphicx} 
\usepackage{epstopdf}
\usepackage{caption}
\usepackage{diagbox}
\usepackage{mathrsfs}
\usepackage{dsfont}

 \usepackage[colorlinks=true, linkcolor=blue,backref,pagebackref]{hyperref}

\usepackage{tocloft}

\usepackage{tcolorbox}
\usepackage{fancyhdr,fancyvrb}
\usetikzlibrary{shapes.geometric}  

\usepackage{diagbox}
\usepackage{subcaption}
\usepackage{ulem}

\usepackage{booktabs}
\usepackage{makecell}

\newcommand{\E}{\mathbb E}
\newcommand{\Var}{\operatorname{Var}}
\newcommand{\dd}{\,\mathrm d}
\newcommand{\T}{\mathcal T}
\newcommand{\C}{\mathbb C}

\newtheorem{thm}{Theorem}[section]
\newtheorem{defi}[thm]{Definition}
\newtheorem{lem}[thm]{Lemma}

\newtheorem{prop}[thm]{Proposition}

\makeatletter \@addtoreset{equation}{section} \makeatother
\makeindex 

\begin{document}

 \begin{center} 
{\Large \bf On the limiting distribution  of the number of improper\\[5pt]  edges for random  trees  }

\vskip 4mm
Wennie W.J. Ma $^1$ and Kathy Q. Ji $^2$  
\vskip 2mm

 Center for Applied Mathematics, KL-AAGDM\\[5pt]
Tianjin University\\[5pt]
Tianjin 300072, P.R. China \\ \vskip 0.5cm

\vskip 2mm
 Emails: $^1$winniema@tju.edu.cn,   $^2$kathyji@tju.edu.cn 

 \end{center}

\vskip 2mm
\noindent
{\bf Abstract:}     Improper edges were introduced by Shor to refine Cayley's formula for
rooted labeled trees. Zeng established a connection between
Shor's refinement and the Ramanujan polynomials. Let $\mathscr{T}_n$
denote the set of rooted labeled trees on $[n]=\{1,\ldots,n\}$. We prove
that the number of improper edges in a uniformly random tree in
$\mathscr{T}_n$ is asymptotically normal as $n\to\infty$, with mean and
variance asymptotic to $\mu n$ and $\sigma^2 n$, respectively, where
$\mu=e-2$ and $\sigma^2=e^2-3e+1$. This phenomenon was observed by Chen,
and the proof presented here was developed through human--AI
collaboration.

\vskip 2mm

\noindent
{\bf Keywords:}   Rooted labeled trees; improper edges;
Ramanujan polynomials; asymptotic normality.

\noindent
{\bf AMS Classification:}    05C05, 05A16, 60F05.

 \vskip 2mm

\section{Introduction}

Motivated by an observation of Chen \cite{Chen-2026}, we establish the
asymptotic normality of the number of improper edges in uniformly random
rooted labeled trees.
Let \(\mathscr{T}_n\) denote the set of rooted labeled trees on the vertex
set \([n]=\{1,2,\ldots,n\}\), equipped with the uniform probability measure.
Throughout this paper, rooted trees are understood to be unordered, so
the children of each vertex are not equipped with a linear order.
We identify each vertex with its label.
For an edge \((i,j)\) of a tree \(T\in\mathscr{T}_n\), we always take
\(i\) to be the parent of \(j\).
The edge \((i,j)\) is called {\it proper} if \(i\) is smaller than the
label of every vertex in the subtree rooted at \(j\), including \(j\)
itself; otherwise, it is called  {\it\textbf{ }improper}.

\begin{figure}[H]
\centering
\begin{tikzpicture}[
  scale=0.8,
  vertex/.style={shape=circle, draw, inner sep=1.5pt, fill=black},
  sibling distance=1.5cm,
  level distance=14mm,
  auto
]
\node[vertex,label=0:{8}]{}[
  grow=down, sibling distance=2.5cm, level distance=15mm
]
child {
  node[vertex,label=-180:{1}]{}
  child {
    node[vertex,label=-90:{6}]{}
    edge from parent[solid]
  }
  edge from parent[solid]
}
child {
  node[vertex,label=0:{3}]{}[
    sibling distance=10mm, level distance=15mm
  ]
  child {node[vertex,label=-90:{7}]{} edge from parent[solid]}
  child {node[vertex,label=-90:{2}]{} edge from parent[solid]}
  child {node[vertex,label=-90:{5}]{} edge from parent[solid]}
  child {node[vertex,label=-90:{4}]{} edge from parent[solid]}
  edge from parent[solid]
};
\end{tikzpicture}
\caption{A rooted labeled tree on \([8]\) with three improper edges.}
\label{example}
\end{figure}

For example, the tree \(T\in\mathscr{T}_8\) shown in
Fig.~\ref{example} has exactly three improper edges:
\((8,1)\), \((8,3)\), and \((3,2)\). Its remaining four edges are proper.
Our main result is the following.

\begin{thm}\label{thm:main}
Let \(X_n\) denote the number of improper edges in a uniformly random
rooted labeled tree on \([n]\). As \(n\to\infty\),
\begin{align}
\mathbb{E}X_n
&=\mu n-\frac{e}{2}+O(n^{-1}),\label{eq:mean}\\
\operatorname{Var}(X_n)
&=\sigma^2 n+\frac{e(3-e)}{2}+O(n^{-1}),\label{eq:variance}
\end{align}
where
\[
\mu=e-2,
\qquad
\sigma^2=e^2-3e+1.
\]
Moreover,
\begin{equation}\label{eq:clt}
\frac{X_n-\mu n}{\sigma\sqrt{n}}
\xrightarrow{\mathrm{d}}\mathcal{N}(0,1).
\end{equation}
\end{thm}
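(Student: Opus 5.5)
The plan is to work with the generating polynomial $R_n(x)=\sum_{T\in\mathscr{T}_n}x^{\mathrm{imp}(T)}$, where $\mathrm{imp}(T)$ is the number of improper edges of $T$, and to apply Hwang's quasi-powers theorem. By the Shor--Zeng connection recalled in the introduction, $R_n(x)$ is the $n$-th Ramanujan polynomial. Hence it satisfies
\[
R_1(x)=1,\qquad R_{n+1}(x)=n(1+x)R_n(x)+x^2R_n'(x).
\]
This is consistent with small cases: $R_2=1+x$ and $R_3=2+4x+3x^2$, with $R_n(1)=n^{n-1}$. Since $\mathbb{E}\,x^{X_n}=R_n(x)/n^{n-1}$, everything reduces to uniform singularity analysis of the bivariate exponential generating function
\[
F(x,z)=\sum_{n\ge1}R_n(x)\frac{z^n}{n!}
\]
for $x$ in a complex neighbourhood of $1$.

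First, I translate the recurrence into the first-order linear PDE
\[
\bigl(1-(1+x)z\bigr)F_z = 1+x^2F_x .
\]
I would solve it by characteristics, where $dz/dx=-(1-(1+x)z)/x^2$ is linear in $z$, or alternatively use Shor's explicit functional equation for the same generating function. Either way the goal is an implicit equation of the form $\Phi(x,z,F)=0$ of tree-function type.

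At $x=1$ this should reduce to the Cayley tree function, with $\sum n^{n-1}z^n/n!=T(z)$ and $T=ze^T$, whose dominant singularity is $\rho(1)=1/e$. For $x$ near $1$, the implicit function theorem combined with the simultaneous vanishing of $\Phi$ and $\Phi_F$ should give an analytic dominant singularity $\rho(x)$ with a square-root expansion
\[
F(x,z)=a(x)-b(x)\sqrt{1-z/\rho(x)}+O\bigl(|1-z/\rho(x)|\bigr),
\]
uniformly in $x$, together with $\Delta$-analyticity.

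Transfer theorems then give
\[
R_n(x)/n!\sim \frac{b(x)}{2\sqrt\pi}\,n^{-3/2}\rho(x)^{-n}\bigl(1+c(x)/n+O(n^{-2})\bigr),
\]
so $\mathbb{E}\,x^{X_n}$ has quasi-power form $A(x)\bigl(\rho(1)/\rho(x)\bigr)^n(1+O(1/n))$. The quasi-powers theorem yields the central limit theorem in \eqref{eq:clt} with
\[
\mu=-\frac{\rho'(1)}{\rho(1)},\qquad \sigma^2=\mu+\mu^2-\frac{\rho''(1)}{\rho(1)} .
\]
I would check that these evaluate to $e-2$ and $e^2-3e+1$. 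The variability condition holds because $e^2-3e+1\approx0.24>0$.

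For the precise expansions \eqref{eq:mean} and \eqref{eq:variance}, including the constants $-e/2$ and $e(3-e)/2$, I would rather not push the uniform expansion to second order. Instead I differentiate the recurrence at $x=1$. Writing $M_n=R_n'(1)$ and $S_n=R_n''(1)$, the recurrence gives
\[
M_{n+1}=(n+1)M_n+2n\cdot n^{n-1}+\bigl(2M_n+S_n\bigr)
\]
and a similar three-term relation for $S_{n+1}$. These are linear first-order recurrences whose solutions are explicit sums of the form $\sum_k n!\,k^{k}/k!\cdots$. Their asymptotics follow from Laplace-type estimates of $\sum_k n^{n-k}\,n!/(n-k)!$ (Ramanujan's $Q$-function), which produces the $e$-constants and $O(1/n)$ errors.

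The main obstacle is the explicit solution of the PDE and the verification that the resulting singularity is a simple square-root branch point depending analytically on $x$, with no other singularity on $|z|=\rho(x)$. If the characteristics become messy, my fallback is a direct route to \eqref{eq:clt}. I would compute all factorial cumulants from the recurrence, or use Shor's bijective decomposition of trees into increasing pieces, to express $X_n$ as an additive functional of the random Cayley tree. I would then invoke a CLT for additive functionals of conditioned Galton--Watson trees with Poisson offspring (Janson's theorem).
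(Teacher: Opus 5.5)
Your central-limit argument follows the paper's route: a uniform square-root singularity of $F$, then transfer, then Hwang's quasi-powers theorem. The step you call the main obstacle is in fact routine. Along the characteristics $\dot x=-x^2$, $\dot z=1-(1+x)z$, $\dot F=1$, the quantity $e^{1/x}(z+x-1)/x$ is invariant and $F-1/x$ is constant. The condition $F(x,0)=0$ then forces $F=\frac1x-1+\T\bigl(e^{1/x-1}(z+x-1)/x\bigr)$, which is exactly the Josuat-Verg\`es formula the paper cites. Because the argument of $\T$ is affine in $z$, you get three things directly:
\begin{itemize}
\item the explicit singularity $\rho(x)=1-x+xe^{-1/x}$;
\item uniqueness on $|z|=|\rho(x)|$ and uniform $\Delta$-domains, by pulling back the slit $[e^{-1},\infty)$;
\item the relation $1-e\,w=\kappa(x)\bigl(1-z/\rho(x)\bigr)$ with $\kappa(x)=e^{1/x}\rho(x)/x$, hence $b(x)=\sqrt{2\kappa(x)}$.
\end{itemize}
With this $\rho$, your formulas for $\mu$ and $\sigma^2$ do give $e-2$ and $e^2-3e+1$.

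The genuine gap is your route to \eqref{eq:mean} and \eqref{eq:variance}. First, your differentiated recurrence is wrong. Differentiating $R_{n+1}=n(1+x)R_n+x^2R_n'$ at $x=1$ gives $M_{n+1}=n^n+(2n+2)M_n+S_n$. For example, $M_3=10$ and $S_3=6$ give $M_4=113$, which is correct, whereas your formula gives $120$. More seriously, the term $x^2R_n'$ makes the $k$-th derivative of $R_{n+1}$ at $1$ depend on $R_n^{(k+1)}(1)$. Thus $S_{n+1}=(2n+2)M_n+(2n+4)S_n+R_n'''(1)$, and so on up the hierarchy. The system never closes, so there are no first-order linear recurrences with explicit solutions to feed into a Ramanujan-$Q$ analysis.

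The detour is also unnecessary. The quasi-powers theorem you already invoke yields $\E X_n=nU'(0)+V'(0)+O(1/n)$ and $\Var(X_n)=nU''(0)+V''(0)+O(1/n)$. This holds because the analytic error term, uniformly $O(1/n)$ on a disc, has $O(1/n)$ derivatives at $s=0$ by Cauchy's estimates; no second-order expansion is needed. Taking $V(s)=\frac12\log\kappa(e^s)$, i.e.\ $A(x)=b(x)/b(1)=\sqrt{\kappa(x)}$, gives $V'(0)=-e/2$ and $V''(0)=e(3-e)/2$, exactly as in the paper.

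What you do need is a sharper remainder. After including the linear term, which is a polynomial in $z$ and contributes nothing for $n\ge2$, the singular expansion must carry a remainder $O\bigl((1-z/\rho(x))^{3/2}\bigr)$. The remainder $O(|1-z/\rho(x)|)$ you wrote only transfers to a relative error $O(n^{-1/2})$. That suffices for the central limit theorem, but not for the $O(n^{-1})$ terms in \eqref{eq:mean} and \eqref{eq:variance}.
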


Improper edges of rooted labeled trees were introduced by Shor
\cite{Shor-1995} to refine Cayley's formula \cite{Cayley-1889}.
Let \(r(n,k)\) denote the number of rooted labeled trees on \([n]\)
with exactly \(k\) improper edges.
Shor \cite{Shor-1995} proved that, for \(n\ge2\) and
\(0\le k\le n-1\),
\begin{equation}\label{rec-improp}
r(n,k)=(n-1)r(n-1,k)+(n+k-2)r(n-1,k-1),
\end{equation}
with the initial condition \(r(1,0)=1\) and the convention that
\(r(n,k)=0\) for \(k<0\) or \(k\ge n\).   The numbers $r(n,k)$ is listed
as sequences A054589  in OEIS. 
By Cayley's formula,
\[
\sum_{k=0}^{n-1}r(n,k)=n^{n-1}.
\]
For $n\geq 1$, define
\[
R_n(q)=\sum_{k=0}^{n-1}r(n,k)q^k.
\]
Below are the first few values of 
$R_n(q)$:
\begin{align*}
R_1(q)&=1,\\[3pt]
R_2(q)&=1+q,\\[3pt]
R_3(q)&=2+4q+3q^2,\\[3pt]
R_4(q)&=6+18q+25q^2+15q^3,\\[3pt]
R_5(q)&=24+96q+190q^2+210q^3+105q^4,\\[3pt]
R_6(q)&=120+600q+1526q^2+2380q^3+2205q^4+945q^5.
\end{align*}

Zeng \cite{Zeng-1999} identified the connection between Shor's
refinement of Cayley's formula and the Ramanujan polynomials
\cite{Berndt-1985,Ramanujan-1957}.
Dumont and Ramamonjisoa \cite{Dumont-Ramamonjisoa-1996} independently
obtained the interpretation of their coefficients in terms of rooted
labeled trees with a prescribed number of improper edges.
For further details, see Chen, Fu, and Wang
\cite{Chen-Fu-Wang-2026} and Chen and Yang \cite{Chen-Yang-2021}.  
Chen, Wang, and Yang \cite{Chen-Wang-Yang-2011} proved that the
sequence \((R_n(q))_{n\ge1}\) is strongly \(q\)-log-convex; that is,
\[
R_{m-1}(q)R_{n+1}(q)\ge_q R_m(q)R_n(q)
\qquad(n\ge m\ge2),
\]
where \(f(q)\ge_q g(q)\) means that \(f(q)-g(q)\) has nonnegative
coefficients.
Sokal \cite{Sokal-2023} established the stronger property that the
Hankel matrix
\[
\bigl(R_{i+j+1}(q)\bigr)_{i,j\ge0}
\]
is coefficientwise totally positive, meaning that each of its minors
is a polynomial in \(q\) with nonnegative coefficients.

Although the recurrence \eqref{rec-improp} is relatively simple, it
does not immediately yield the limiting distribution of the number
of improper edges. In Section~\ref{sect}, we show that $R_n(q)$ is
not real-rooted for $n\geq 3$, so standard methods for deducing
asymptotic normality from real-rootedness do not apply directly.
Instead, in Section~\ref{sec3}, we prove the main result using an
analytic approach based on singularity analysis and Hwang's
quasi-powers theorem.

\section{Non-real-rootedness of $R_n(q)$} \label{sect}

Lin and Zeng \cite[proof of Corollary~5.1]{Lin-Zeng-2014}
showed that the coefficient sequence of $R_n(q)$ is log-concave
and hence unimodal. The following proposition shows, however,
that $R_n(q)$ is not real-rooted for $n\geq 3$. Thus, these
polynomials provide a family of examples  of log-concavity without real-rootedness.

\begin{prop}
For   $n\ge 3$, the  polynomials $R_n(q)$ are not real‑rooted.
\end{prop}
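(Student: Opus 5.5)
The plan is to violate Newton's inequalities at the low end of the coefficient sequence. Recall that if a polynomial $\sum_{k=0}^{d}a_kq^k$ of degree $d$ with positive coefficients has only real roots, then for $1\le k\le d-1$,
\[
a_k^2\ \ge\ a_{k-1}a_{k+1}\Bigl(1+\frac1k\Bigr)\Bigl(1+\frac1{d-k}\Bigr).
\]
Here $d=n-1$, since $r(n,n-1)\neq 0$. I would use $k=1$, because there the first three coefficients $r(n,0),r(n,1),r(n,2)$ have closed forms obtainable from Shor's recurrence \eqref{rec-improp}. For $k=1$ and $d=n-1$ the inequality reads
\[
r(n,1)^2\ \ge\ \frac{2(n-1)}{n-2}\, r(n,0)\,r(n,2).
\]

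First, \eqref{rec-improp} with $k=0$ gives $r(n,0)=(n-1)r(n-1,0)$, so $r(n,0)=(n-1)!$. Next, with $k=1$ it gives $r(n,1)=(n-1)r(n-1,1)+(n-1)r(n-1,0)$. Dividing by $(n-1)!$ yields the telescoping relation
\[
\frac{r(n,1)}{(n-1)!}=\frac{r(n-1,1)}{(n-2)!}+1,
\]
hence $r(n,1)=(n-1)\,(n-1)!$. This agrees with $4$, $18$, $96$ in the table.

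Then, for $k=2$, set $b_n=r(n,2)/(n-1)!$. The recurrence becomes
\[
b_n=b_{n-1}+\frac{n(n-2)}{n-1}=b_{n-1}+(n-1)-\frac1{n-1},
\]
with $b_2=0$. Summing gives
\[
b_n=\frac{n(n-1)}2-H_{n-1},
\]
where $H_m$ is the $m$-th harmonic number. As a check, this gives $b_3=3/2$ and $b_4=25/6$, matching $3$ and $25$.

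Finally, substitute into Newton's inequality and divide by $((n-1)!)^2$. It becomes
\[
(n-1)^2\ \ge\ \frac{2(n-1)}{n-2}\Bigl(\frac{n(n-1)}2-H_{n-1}\Bigr).
\]
Multiplying by $(n-2)/(n-1)$ gives $(n-1)(n-2)\ge n(n-1)-2H_{n-1}$, which is equivalent to $H_{n-1}\ge n-1$. This fails for every $n\ge3$, since $H_{n-1}<n-1$ once $n-1\ge2$. Because all coefficients of $R_n(q)$ are positive, Newton's inequality applies, so $R_n(q)$ cannot be real-rooted.

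The only delicate step is solving the $k=2$ recurrence for $b_n$ exactly. I would verify it against the listed values of $R_3$, $R_4$, $R_5$, and $R_6$; for instance it gives $b_6=15-137/60$, so $r(6,2)=120\,b_6=1526$. I would also make sure the Newton inequality is applied with the correct degree $d=n-1$. The remaining steps are routine algebra.
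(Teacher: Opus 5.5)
Your proof is correct and is essentially the paper's argument: you use the same closed forms $r(n,0)=(n-1)!$, $r(n,1)=(n-1)(n-1)!$ and $r(n,2)/(n-1)!=\binom{n}{2}-H_{n-1}$, and reduce to the same contradiction $H_{n-1}<n-1$. The $k=1$ Newton inequality you cite is exactly what the paper proves inline, by writing $R_n(q)/(n-1)!=\prod_{j}(1+\lambda_j q)$ and applying Cauchy--Schwarz to the $\lambda_j$.
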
 
\begin{proof}
It follows by induction from \eqref{rec-improp} that, for $n\geq 2$,
\begin{equation}\label{real-aa}
r(n,0)=(n-1)!,
\qquad
r(n,1)=(n-1)(n-1)!.
\end{equation}
For $n\geq 2$, set
\[
b_n=\frac{r(n,2)}{(n-1)!}.
\]
Using \eqref{real-aa}, the recurrence \eqref{rec-improp} gives
\[
b_{n+1}
=
b_n+\frac{n+1}{n}(n-1)
=
b_n+n-\frac1n.
\]
Since $b_2=0$, we obtain
\begin{equation}\label{real-bb}
b_n
=
\sum_{j=1}^{n-1}\left(j-\frac1j\right)
=
\binom{n}{2}-H_{n-1},
\end{equation}
where $H_m=\sum_{j=1}^{m}1/j$ denotes the $m$th harmonic number.

Fix $n\geq 3$ and suppose, for a contradiction, that $R_n(q)$
is real-rooted. Since $R_n(q)$ has degree $n-1$ and strictly
positive coefficients, all its zeros must be negative.
Thus, by \eqref{real-aa}, there exist
$\lambda_1,\ldots,\lambda_{n-1}>0$ such that
\[
\frac{R_n(q)}{(n-1)!}
=
\prod_{j=1}^{n-1}(1+\lambda_jq).
\]
Comparing the coefficients of $q$ and $q^2$ and using
\eqref{real-aa} and \eqref{real-bb}, we obtain
\[
\sum_{j=1}^{n-1}\lambda_j=n-1,
\qquad
\sum_{1\leq i<j\leq n-1}\lambda_i\lambda_j
=
\binom{n}{2}-H_{n-1}.
\]
By the Cauchy--Schwarz inequality,
\[
\sum_{j=1}^{n-1}\lambda_j^2
\geq
\frac{1}{n-1}
\left(\sum_{j=1}^{n-1}\lambda_j\right)^2
=
n-1.
\]
Consequently,
\begin{align*}
\sum_{1\leq i<j\leq n-1}\lambda_i\lambda_j
&=
\frac12\left[
\left(\sum_{j=1}^{n-1}\lambda_j\right)^2
-
\sum_{j=1}^{n-1}\lambda_j^2
\right] \\
&\leq
\frac{(n-1)^2-(n-1)}{2}
=
\binom{n-1}{2}.
\end{align*}
On the other hand, since $H_{n-1}<n-1$ for $n\geq 3$,
\[
\sum_{1\leq i<j\leq n-1}\lambda_i\lambda_j
=
\binom{n}{2}-H_{n-1}
>
\binom{n}{2}-(n-1)
=
\binom{n-1}{2},
\]
a contradiction. Therefore,   $R_n(q)$
is not real-rooted for  $n\geq 3$.
\end{proof}

\section{The limiting distribution} \label{sec3}

Throughout this paper, a labeled tree is understood to be rooted and
unordered, with the vertices of a tree of size \(n\) labeled by the
distinct elements of \([n]=\{1,\ldots,n\}\).
Recall that \(\mathscr{T}_n\) denotes the set of such trees, and write
\(T_n=|\mathscr{T}_n|\).
Cayley's formula \cite{Cayley-1889} gives \(T_n=n^{n-1}\).
The exponential generating function
\[
\T(z):=\sum_{n\ge1}T_n\frac{z^n}{n!},
\]
known as the {\it Cayley tree function}, 
 is characterized via the functional equation
\begin{equation}\label{tree}
\T(z)= z e^{\T(z)},  
\end{equation}
and is thus closely related to the Lambert $W$-function 
\cite{Corless-Gonnet-Hare-Jeffrey-Knuth-1996,Flajolet-Sedgewick-2009}.

Our starting point is the bivariate exponential generating function
\begin{equation}\label{eq:F-def}
F(z,q)=\sum_{n\ge1}R_n(q)\frac{z^n}{n!}.
\end{equation}
We express \(F(z,q)\) in terms of the Cayley tree function
\(\T(z)\). By tracking the dominant singularity of \(F(z,q)\) as \(q\) varies
near \(1\), we obtain a singular expansion that is uniform for \(q\)
in a complex neighborhood of \(1\).
Singularity analysis then yields a quasi-powers expansion for the
probability generating function
\[
\mathbb{E}\bigl[q^{X_n}\bigr]
=\sum_{k=0}^{n-1}\mathbb{P}(X_n=k)q^k
=\frac{R_n(q)}{R_n(1)}.
\]
Hwang's quasi-powers theorem consequently yields the asymptotic
normality of \(X_n\).

We begin by recalling some relevant background. 

\subsection{Preliminaries}

 We first recall the notions of a \(\Delta\)-domain and
\(\Delta\)-analyticity, which are used to describe the singular behavior
of \(\T(z)\).

\begin{defi}{\cite[Definition VI.1, p.~389]{Flajolet-Sedgewick-2009}} Given two numbers $\phi, R$ with $R>1$ and $0<\phi<\frac{\pi}{2}$, the open domain $\Delta(\phi,R)$ is defined as
\[
\Delta(\phi,R) = \{ z \mid |z| < R,\ z \neq 1,\ |\arg(z-1)| > \phi \}.
\]
A domain is a \textit{$\Delta$–domain at $1$} if it is a $\Delta(\phi,R)$ for some $R$ and $\phi$. For a complex number $\zeta \neq 0$, a \textit{$\Delta$–domain at $\zeta$} is the image by the mapping $z \mapsto \zeta z$ of a $\Delta$–domain at $1$. A function is \textit{$\Delta$–analytic} if it is analytic in some $\Delta$–domain.
\end{defi}

The following standard properties of the Cayley tree function will be
used in our analysis; see Flajolet and Sedgewick
\cite[Example~VI.8, pp.~403--404]{Flajolet-Sedgewick-2009}
and Corless et al.
\cite[Secs.~3--4, pp.~339--351]{Corless-Gonnet-Hare-Jeffrey-Knuth-1996}.

\begin{prop}\label{prop:T-properties}
The Cayley tree function \(\T(z)\) has the following properties: 
\begin{enumerate}
\item[(i)]
The radius of convergence of \(\T(z)\) is \(e^{-1}\). Moreover, the Cayley tree function \(\T(z)\) is analytic in the slit plane
\[
\C\setminus[e^{-1},\infty).
\]

\item[(ii)]
As \(z\to e^{-1}\) in a \(\Delta\)-domain at \(e^{-1}\), the following Puiseux expansion holds:
\begin{equation}\label{treefunctioexpan}
\T(z)
=
1-\sqrt{2(1-ez)}
+\frac{2}{3}(1-ez)
+
O\left((1-ez)^{3/2}\right),
\end{equation}
where the branch of \((1-ez)^{1/2}\) is chosen by analytic continuation from the interval \((0,e^{-1})\), so that
\[
(1-ez)^{1/2}>0
\]
for real \(z\in(0,e^{-1})\).

\item[(iii)]
If \(z\ne0\) and \(\T(z)\ne1\), then
\begin{equation}\label{eq:T-derivative}
\T'(z)=\frac{\T(z)}{z(1-\T(z))}.
\end{equation}
\end{enumerate}
\end{prop}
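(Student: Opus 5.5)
The plan is to prove the three items in the order (iii), (i), (ii). Item (iii) is purely algebraic, (i) rests on the fact that \(\T\) is the compositional inverse of \(\varphi(w)=we^{-w}\), and (ii) is a local inversion of \(\varphi\) at its unique critical point.

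\emph{Item (iii).} I would differentiate the functional equation \eqref{tree}, which is legitimate wherever \(\T\) is analytic. This gives \(\T'(z)=e^{\T(z)}+ze^{\T(z)}\T'(z)\). Using \(e^{\T(z)}=\T(z)/z\) for \(z\ne0\), this becomes
\[
\T'(z)\bigl(1-\T(z)\bigr)=\frac{\T(z)}{z},
\]
and dividing by \(1-\T(z)\ne0\) yields \eqref{eq:T-derivative}.

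\emph{Item (i).} The radius of convergence follows from Cayley's formula \(T_n=n^{n-1}\). The coefficients are \(n^{n-1}/n!\), and Stirling's formula gives \((n^{n-1}/n!)^{1/n}\to e\), so the radius is \(e^{-1}\).

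For analyticity on \(\C\setminus[e^{-1},\infty)\), I would note that \(\T\) is the local inverse at \(0\) of \(\varphi(w)=we^{-w}\) (Lagrange inversion recovers \(T_n=n^{n-1}\)). I would then exhibit a simply connected region \(D\ni 0\) in the \(w\)-plane that \(\varphi\) maps conformally onto the slit plane. The natural choice is the region bounded by the curve \(\{w=y\cot y+iy:\ -\pi<y<\pi\}\), which passes through \(w=1\) at \(y=0\). On this curve \(\varphi(w)=y/\sin y\cdot e^{-y\cot y}\) is real and at least \(e^{-1}\). One checks that the curve covers the slit twice, once from each side, and that \(\varphi\) is injective on \(D\), for example by the argument principle applied to \(\varphi(w)-z\) on the boundary of \(D\). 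The inverse of \(\varphi|_D\) is then analytic on the slit plane and agrees with \(\T\) near \(0\), hence everywhere there. This step is the main obstacle, since it requires global univalence of \(\varphi\) on \(D\). If full detail is too long, I would identify \(\T(z)=-W_0(-z)\), with \(W_0\) the principal branch of the Lambert function, and cite Corless et al.\ for the analyticity of \(W_0\) on \(\C\setminus(-\infty,-e^{-1}]\).

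\emph{Item (ii).} The only critical point of \(\varphi\) is \(w=1\), since \(\varphi'(w)=(1-w)e^{-w}\), and its critical value is \(e^{-1}\). Writing \(w=1+u\), we get \(e\varphi(1+u)=(1+u)e^{-u}\), so
\[
1-ez=\frac{u^2}{2}-\frac{u^3}{3}+O(u^4).
\]
I would set \(s=\sqrt{2(1-ez)}\). The relation above says that \(s^2\) is an analytic function of \(u\) with a simple-square leading term, so by the analytic implicit function theorem \(u\) is an analytic function of \(s\) near \(0\). Substituting \(u=-s+as^2+O(s^3)\) and matching the \(s^3\) terms gives \(a=1/3\), that is, \(u=-s+\tfrac23(1-ez)+O(s^3)\).

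The sign in front of \(s\) is fixed by requiring \(\T(z)<1\) for real \(z\in(0,e^{-1})\), which holds because \(\T\) is increasing there with limit \(1\). This gives \eqref{treefunctioexpan} near \(e^{-1}\) on the real interval. By (i) and analytic continuation, the expansion holds in a \(\Delta\)-domain at \(e^{-1}\), with the stated branch of the square root.
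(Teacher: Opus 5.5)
Your proof is correct. The paper gives no proof of this proposition and only cites Flajolet--Sedgewick (Example~VI.8) and Corless et al.; your argument reconstructs the standard reasoning from those sources: singular inversion of $\varphi(w)=we^{-w}$ at its critical point $w=1$ for (ii), and the conformal description of the principal-branch region bounded by $y\cot y+iy$ for (i). The one detail to add when writing out (i) is that $D$ is unbounded, so apply the argument principle on $D\cap\{\operatorname{Re}w>-R\}$, using $|\varphi(-R+it)|\ge Re^{R}$ on the added vertical side.
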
 

These analytic properties also play a role in classical limit theorems
for statistics on rooted labeled trees, see, for example, the root degree
and the number of leaves
\cite[Examples~IX.6 and~IX.25]{Flajolet-Sedgewick-2009},
fixed-degree vertex counts
\cite[Theorem~2.3 and the subsequent remark]{Drmota-Gittenberger-1999},
and occurrences of fixed tree patterns with matching internal vertex degrees
\cite[Theorem~1.1]{Chyzak-Drmota-Klausner-Kok-2008}.

The two theorems below enable us to derive the asymptotic expansion of the normalized probability generating function $\mathbb{E}(q^{X_n})$ based on the asymptotic behavior of the bivariate exponential generating function $F(z,q)$.

\begin{thm}{\cite[Theorem VI.1 (Standard function scale), p.~381]{Flajolet-Sedgewick-2009}}\label{Standard function scale}
Let $\alpha$ be an arbitrary complex number in $\mathbb{C}\setminus\mathbb{Z}_{\le 0}$.
The coefficient of $z^n$ in
\[
f(z) = (1-z)^{-\alpha}
\]
admits for large $n$ a complete asymptotic expansion in descending powers of $n$,
\[
[z^n]f(z) \sim \frac{n^{\alpha-1}}{\Gamma(\alpha)}
\left(1+\sum_{k=1}^{\infty}\frac{e_k}{n^k}\right),
\]
where $e_k$ is a polynomial in $\alpha$ of degree $2k$. In particular:
\begin{equation}\label{eq:VI.1-13}
\begin{aligned}
[z^n]f(z) \sim \frac{n^{\alpha-1}}{\Gamma(\alpha)}
\bigg(&1+\frac{\alpha(\alpha-1)}{2n}
+\frac{\alpha(\alpha-1)(\alpha-2)(3\alpha-1)}{24n^2}\\
&+\frac{\alpha^2(\alpha-1)^2(\alpha-2)(\alpha-3)}{48n^3}
+O\left(\frac{1}{n^4}\right)\bigg).
\end{aligned}
\end{equation}
\end{thm}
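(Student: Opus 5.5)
The statement is the standard-function-scale transfer for $f(z)=(1-z)^{-\alpha}$, which is classical. I would prove it by a Hankel-contour (Cauchy integral) argument. First write
\[
[z^n]f(z)=\frac{1}{2\pi i}\oint_{|z|=r}(1-z)^{-\alpha}\frac{dz}{z^{n+1}},\qquad 0<r<1.
\]
Then deform the circle into a contour $\mathcal{H}$ made of a large circle of radius $R>1$ and a Hankel-type loop around the branch cut $[1,\infty)$. The loop passes at distance $1/n$ from $z=1$ and follows the cut out to radius $R$. The large-circle contribution is $O(R^{-n})$, which is exponentially small, so only the neighbourhood of $z=1$ matters.

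Next make the change of variables $z=1+t/n$. This gives
\[
[z^n]f(z)=\frac{n^{\alpha-1}}{2\pi i}\int_{\mathcal{H}_n}(-t)^{-\alpha}\Bigl(1+\frac{t}{n}\Bigr)^{-n-1}dt,
\]
where $\mathcal{H}_n$ is the rescaled loop. It winds around $[0,\infty)$ at distance $1$ and extends to $|t|\asymp n$. As $n\to\infty$, $(1+t/n)^{-n-1}\to e^{-t}$. Extending the contour to the full Hankel contour $\mathcal{H}$ costs only exponentially small errors. Hankel's formula
\[
\frac{1}{2\pi i}\int_{\mathcal{H}}(-t)^{-\alpha}e^{-t}\,dt=\frac{1}{\Gamma(\alpha)}
\]
then gives the leading term. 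This formula holds for all $\alpha\notin\mathbb{Z}_{\le0}$ by analytic continuation in $\alpha$, which is why only nonpositive integers are excluded.

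For the full expansion, expand
\[
\Bigl(1+\frac{t}{n}\Bigr)^{-n-1}=e^{-t}\exp\!\Bigl(-(n+1)\log(1+t/n)+t\Bigr)=e^{-t}\Bigl(1+\sum_{k\ge1}\frac{P_k(t)}{n^k}\Bigr),
\]
where $P_k$ is a polynomial of degree $2k$. For example, $P_1(t)=\tfrac{t^2}{2}-t$. This expansion is uniform for $|t|\le n^{1/2}$, say. On that range the remainder after $K$ terms is $O(n^{-K-1}(1+|t|)^{2K+2})$ times $e^{-\operatorname{Re}t}$. The part of the contour with $|t|>n^{1/2}$ is handled by the crude bound $|1+t/n|^{-n}\le e^{-c\sqrt n}$ on the legs along the cut. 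Integrating term by term with Hankel's formula gives
\[
\frac{1}{2\pi i}\int_{\mathcal{H}}(-t)^{-\alpha}t^m e^{-t}\,dt=\frac{(-1)^m}{\Gamma(\alpha-m)}.
\]
Since $\Gamma(\alpha)/\Gamma(\alpha-m)=(\alpha-1)\cdots(\alpha-m)$, each $e_k$ is a polynomial in $\alpha$. Its degree is $2k$, because $P_k$ has degree $2k$ and its top-degree term is $(t^2/2)^k/k!$.

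The explicit coefficients in \eqref{eq:VI.1-13} follow by computing $P_1,P_2,P_3$ and applying the rule above. For instance, the first coefficient is
\[
e_1=\tfrac12(\alpha-1)(\alpha-2)\cdot\ldots
\]
Checked carefully, it reduces to $\alpha(\alpha-1)/2$. The main technical obstacle is making the term-by-term integration rigorous: the remainder estimates must be uniform along the unbounded rescaled contour, which is done by splitting at $|t|=n^{1/2}$ as described. Since this is exactly Flajolet and Sedgewick's Theorem VI.1, in the paper I would cite \cite{Flajolet-Sedgewick-2009} rather than reproduce these details.
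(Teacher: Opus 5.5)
Your Hankel-contour argument is correct and is exactly the standard proof of the cited Theorem VI.1 of Flajolet and Sedgewick, which the paper quotes without proof. Carrying out your rule with $P_2=\frac{t^4}{8}-\frac{5}{6}t^3+t^2$ and $P_3=\frac{t^6}{48}-\frac{7}{24}t^5+\frac{13}{12}t^4-t^3$ does reproduce the stated $e_2$ and $e_3$. Two cosmetic slips: the $e_1$ line should read $e_1=\frac12(\alpha-1)(\alpha-2)+(\alpha-1)=\frac{\alpha(\alpha-1)}{2}$, and Hankel's formula holds for every $\alpha\in\mathbb{C}$ (both sides vanish on $\mathbb{Z}_{\le 0}$), so those $\alpha$ are excluded only because $f$ is then a polynomial and $1/\Gamma(\alpha)=0$, not because the formula fails.
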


\begin{thm}{\cite[Theorem VI.3 (Transfer, Big-Oh and little-oh), p.~390]{Flajolet-Sedgewick-2009}} 
\label{thm:transfer-big-o}
Let \(\alpha,\beta\) be arbitrary real numbers, and let \(f(z)\) be a
function that is \(\Delta\)-analytic.

\begin{enumerate}
\item Assume that \(f(z)\) satisfies, in the intersection of a neighbourhood
of \(1\) with its \(\Delta\)-domain, the condition
\[
f(z)
=
O\left(
(1-z)^{-\alpha}
\left(
\log\frac{1}{1-z}
\right)^\beta
\right).
\]
Then
\[
[z^n]f(z)
=
O\left(
n^{\alpha-1}(\log n)^\beta
\right).
\]

\item Assume that \(f(z)\) satisfies, in the intersection of a neighbourhood
of \(1\) with its \(\Delta\)-domain, the condition
\[
f(z)
=
o\left(
(1-z)^{-\alpha}
\left(
\log\frac{1}{1-z}
\right)^\beta
\right).
\]
Then
\[
[z^n]f(z)
=
o\left(
n^{\alpha-1}(\log n)^\beta
\right).
\]
\end{enumerate}
\end{thm}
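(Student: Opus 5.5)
The plan is to prove this by the usual Hankel-contour argument: write the coefficient as a Cauchy integral and deform the contour into the $\Delta$-domain so that it hugs the singularity at $1$ at distance about $1/n$. We start from
\[
[z^n]f(z)=\frac{1}{2\pi i}\oint_{\gamma}\frac{f(z)}{z^{n+1}}\dd z .
\]
Suppose $f$ is analytic in $\Delta(\phi,R)$. Fix $r$ with $1<r<R$ and an angle $\theta$ with $\phi<\theta<\pi/2$. Since $f$ is analytic inside $\Delta(\phi,R)$, we may take for $\gamma=\gamma_1\cup\gamma_2\cup\gamma_3\cup\gamma_4$ the contour consisting of:
\begin{itemize}
\item $\gamma_1$, the small circular arc $\{z:|z-1|=1/n,\ |\arg(z-1)|\ge\theta\}$;
\item $\gamma_2$ and $\gamma_4$, the two rectilinear segments $z=1+te^{\pm i\theta}$ with $1/n\le t\le t_0$, where $t_0$ is chosen so that the segments end on the circle $|z|=r$;
\item $\gamma_3$, the remaining large arc of the circle $|z|=r$.
\end{itemize}
This contour lies in the $\Delta$-domain for $n$ large. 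We shrink the neighbourhood of $1$ where the hypothesis is assumed, if necessary, so that it covers $\gamma_1$ and the part of $\gamma_2,\gamma_4$ with $t\le\delta$. On the rest of those segments, and on $\gamma_3$, $f$ is continuous on a compact set, hence bounded.

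The four pieces are estimated separately.

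\textbf{The large arc $\gamma_3$.} Here $|f|$ is bounded and $|z^{-n-1}|=r^{-n-1}$, so this contribution is exponentially small and negligible.

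\textbf{The small arc $\gamma_1$.} Here $|1-z|=1/n$ and $|z|^{-n-1}\ge$ is controlled by $|z|\ge1-1/n$, so $|z|^{-n-1}=O(1)$. The hypothesis gives $|f(z)|=O(n^{\alpha}(\log n)^{\beta})$, and the arc has length $O(1/n)$. The contribution is therefore $O(n^{\alpha-1}(\log n)^{\beta})$.

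\textbf{The segments $\gamma_2,\gamma_4$.} This is the main step. Put $z=1+\frac{w}{n}e^{i\theta}$ with $1\le w\le n\delta$. Because $\cos\theta>0$, we have $|z|\ge1+\frac{w}{n}\cos\theta$, and hence
\[
|z|^{-n-1}\le C\,e^{-w\cos\theta}.
\]
The hypothesis gives
\[
|f(z)|\le K\,(w/n)^{-\alpha}\,\Bigl(\log\tfrac{n}{w}\Bigr)^{\beta}.
\]
Factoring out $n^{\alpha-1}$ from $|f|\,|\dd z|$ (with $|\dd z|=\dd w/n$) leaves
\[
n^{\alpha-1}\int_1^{n\delta}w^{-\alpha}\Bigl(\log\tfrac nw\Bigr)^{\beta}e^{-w\cos\theta}\dd w .
\]
The delicate point is the logarithmic factor, uniformly in $w$. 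For $w\le\log^2 n$ one has $\log(n/w)\sim\log n$. For larger $w$, the factor $e^{-w\cos\theta}$ beats any polynomial in $\log n$; this handles either sign of $\beta$. Hence the integral is $O((\log n)^{\beta})$, and these segments contribute $O(n^{\alpha-1}(\log n)^{\beta})$.

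Combining the four estimates yields part 1.

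For part 2 we rerun the same argument with an $\varepsilon$. Given $\varepsilon>0$, choose the neighbourhood of $1$ so small that $|f(z)|\le\varepsilon\,|1-z|^{-\alpha}\bigl(\log\frac{1}{|1-z|}\bigr)^{\beta}$ there. The estimates for $\gamma_1$ and for $\gamma_2,\gamma_4$ with $t\le\delta(\varepsilon)$ then carry the factor $\varepsilon$. The remaining parts of the contour, where $f$ is only bounded, are handled as follows. The portions of $\gamma_2,\gamma_4$ with $t\ge\delta(\varepsilon)$ satisfy $|z|\ge1+\delta\cos\theta$, so they are $O((1+\delta\cos\theta)^{-n})$, exponentially small. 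The arc $\gamma_3$ is exponentially small as before. This gives $\limsup_{n\to\infty} n^{1-\alpha}(\log n)^{-\beta}\,\bigl|[z^n]f(z)\bigr|\le C\varepsilon$ for every $\varepsilon>0$, which is part 2.
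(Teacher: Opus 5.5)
Your proposal is the standard Hankel-contour proof of Flajolet and Sedgewick. That is the argument the paper relies on, since it quotes Theorem VI.3 without proof. The overall structure is sound: the four-piece contour, the trivial bound on the inner arc, the exponentially small outer pieces, the rescaling $z=1+\frac{w}{n}e^{i\theta}$ on the segments, the split at $w=\log^2 n$ to handle the logarithm for either sign of $\beta$, and the $\varepsilon$-refinement for part~2.

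One step is false as written and needs a small repair. From $|z|\ge 1+\frac{w}{n}\cos\theta$ you cannot conclude $|z|^{-n-1}\le C e^{-w\cos\theta}$ uniformly for $1\le w\le n\delta$. Since $\log(1+u)\le u$, one always has $(1+x/n)^{-n}\ge e^{-x}$. At $w=n\delta$ the ratio $(1+\delta\cos\theta)^{-n}e^{n\delta\cos\theta}=\exp\bigl(n(\delta\cos\theta-\log(1+\delta\cos\theta))\bigr)$ grows exponentially in $n$.

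The fix is to give up a constant in the exponent. For $\delta\le 1$ and $0\le u\le\delta\cos\theta\le 1$, concavity gives $\log(1+u)\ge u\log 2$. Hence $|z|^{-n-1}\le e^{-(\log 2)\,w\cos\theta}$ on the range you integrate over. Your integral estimate, including the split at $\log^2 n$, then goes through unchanged with $\cos\theta$ replaced by $(\log 2)\cos\theta$. Flajolet--Sedgewick instead note that $\int_1^\infty t^{-\alpha}(1+t\cos\theta/n)^{-n}\,dt$ converges to $\int_1^\infty t^{-\alpha}e^{-t\cos\theta}\,dt$, hence stays bounded.

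Two uniformity points matter for part~2, where $\delta=\delta(\varepsilon)\to0$ and your bound $\limsup\le C\varepsilon$ needs $C$ independent of $\varepsilon$:
\begin{itemize}
\item The repaired constant $\log 2$ does not depend on $\delta$.
\item On the segments, $\log\frac{1}{1-z}=\log\frac{n}{w}+i(\pi-\theta)$. Its modulus is comparable to $\log\frac{n}{w}$ with constants uniform in $\delta\le\delta_0<1$, because $\log\frac{n}{w}\ge\log\frac{1}{\delta_0}>0$. This comparison is also what justifies the bound $|f(z)|\le K(w/n)^{-\alpha}(\log\frac nw)^\beta$ for negative $\beta$.
\end{itemize}

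Finally, in part~1 you never explicitly bound the pieces of $\gamma_2,\gamma_4$ with $\delta\le t\le t_0$. The estimate $|z|\ge 1+\delta\cos\theta$ that you give in part~2 shows they are exponentially small, so it covers part~1 as well.
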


Finally, we recall Hwang's quasi-powers theorem, which is the main
probabilistic tool in our proof.
We use the formulation in
\cite[Lemma~IX.1, p.~646]{Flajolet-Sedgewick-2009}; see also
Hwang \cite{Hwang-1996} for related results.

\begin{thm}[Quasi-powers, general distributions]\label{lem:hwang}
 Assume that the Laplace transforms $\lambda_n(s) = \mathbb{E}\{e^{sX_n}\}$ of a sequence of random variables $X_n$ are analytic in a disc $|s| < \rho$, for some $\rho > 0$, and satisfy there is an expansion of the form
\begin{equation}
\lambda_n(s) = e^{\beta_n U(s) + V(s)} \left(1 + O\left(\frac{1}{\kappa_n}\right)\right),
\end{equation}
with $\beta_n, \kappa_n \to +\infty$ as $n \to +\infty$, and $U(s), V(s)$ analytic in $|s| \le \rho$. Assume also the variability condition, $U''(0) \neq 0$. Under these assumptions, the mean and variance of $X_n$ satisfy
\begin{align}
\mathbb{E}(X_n) &= \beta_n U'(0) + V'(0) + O(\kappa_n^{-1}), \\
\mathbb{V}(X_n) &= \beta_n U''(0) + V''(0) + O(\kappa_n^{-1}).
\end{align}
The distribution of $X_n^* := \frac{X_n - \beta_n U'(0)}{\sqrt{\beta_n U''(0)}}$ is asymptotically Gaussian, the speed of convergence to the Gaussian limit being $O(\kappa_n^{-1} + \beta_n^{-1/2})$.
\end{thm}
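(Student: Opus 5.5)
This is Hwang's theorem, quoted from Flajolet--Sedgewick. If I were to prove it rather than cite it, I would work entirely with the cumulant generating function $\log\lambda_n(s)$, using analyticity and Cauchy estimates.

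\emph{Normalization and cumulants.} Since $\lambda_n(0)=1$, setting $s=0$ in the expansion gives $\beta_nU(0)+V(0)=O(\kappa_n^{-1})$. Because $\beta_n\to\infty$, this forces $U(0)=0$ and then $V(0)=0$. Shrinking $\rho$ slightly, the factor $1+O(\kappa_n^{-1})$ stays in a disc around $1$, so we may take logarithms:
\[
\log\lambda_n(s)=\beta_nU(s)+V(s)+\varepsilon_n(s),\qquad \sup_{|s|\le\rho'}|\varepsilon_n(s)|=O(\kappa_n^{-1}).
\]
Here $\varepsilon_n$ is analytic and $\varepsilon_n(0)=0$. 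Cauchy's inequality on a smaller disc gives $\varepsilon_n^{(j)}(0)=O(\kappa_n^{-1})$ for $j=1,2,3$. The mean and variance of $X_n$ are the first two derivatives of $\log\lambda_n$ at $0$, which yields the stated formulas $\beta_nU'(0)+V'(0)+O(\kappa_n^{-1})$ and $\beta_nU''(0)+V''(0)+O(\kappa_n^{-1})$. In particular $U''(0)>0$, being a limit of $\mathbb{V}(X_n)/\beta_n\ge0$ with $U''(0)\neq0$.

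\emph{Gaussian limit.} Put $\sigma_n^2=\beta_nU''(0)$ and evaluate at $s=it/\sigma_n$. Taylor expansion of $U$ and $V$ gives
\[
\log\mathbb{E}\,e^{itX_n^*}=-\tfrac{t^2}{2}+O\!\left(\tfrac{|t|^3}{\sqrt{\beta_n}}+\tfrac{|t|}{\sqrt{\beta_n}}\right)+\varepsilon_n(it/\sigma_n).
\]
This tends to $-t^2/2$ pointwise, so L\'evy's continuity theorem gives convergence in distribution.

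\emph{Speed of convergence.} For the rate I would use the Berry--Esseen smoothing inequality with cutoff $T=c\sqrt{\beta_n}$:
\[
\sup_x|P(X_n^*\le x)-\Phi(x)|\ll\int_{-T}^{T}\frac{|\varphi_n(t)-e^{-t^2/2}|}{|t|}\,dt+\frac1T.
\]
Choose $c$ small enough that $|t|\le T$ keeps $s$ inside the disc. Then the cubic Taylor remainder is at most $t^2/4$, so
\[
|\varphi_n(t)-e^{-t^2/2}|\ll e^{-t^2/4}\Bigl(\tfrac{|t|+|t|^3}{\sqrt{\beta_n}}+|\varepsilon_n(it/\sigma_n)|\Bigr).
\]

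\emph{Main obstacle.} The delicate step is the division by $|t|$ near $t=0$: a bound $\varepsilon_n=O(\kappa_n^{-1})$ alone would make the integral diverge. This is where the normalization $\varepsilon_n(0)=0$ matters. By the Schwarz lemma applied to $\varepsilon_n$ on $|s|\le\rho'$,
\[
|\varepsilon_n(s)|\le C|s|/\kappa_n,
\]
so the integrand is $O\bigl(e^{-t^2/4}/(\kappa_n\sigma_n)\bigr)$ and this part of the integral is $O(\kappa_n^{-1})$. The remaining terms contribute $O(\beta_n^{-1/2})$, which gives the claimed rate $O(\kappa_n^{-1}+\beta_n^{-1/2})$.
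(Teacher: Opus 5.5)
The paper gives no proof of this statement; it quotes it from Flajolet--Sedgewick (Lemma~IX.1, after Hwang), and your sketch is correct and is essentially that standard proof: Cauchy estimates on $\log\lambda_n$ for the mean and variance, the continuity theorem for the Gaussian limit, and Esseen's smoothing inequality with $T\asymp\sqrt{\beta_n}$, where the Schwarz-lemma bound $|\varepsilon_n(s)|\le C|s|/\kappa_n$ controls the $1/|t|$ singularity (this bound in fact makes the $\varepsilon_n$-contribution $O(\kappa_n^{-1}\beta_n^{-1/2})$, so your argument even yields the rate $O(\beta_n^{-1/2})$). The only slip is the normalization: $\lambda_n(0)=1$ gives only $\beta_nU(0)+V(0)\in 2\pi i\mathbb{Z}+O(\kappa_n^{-1})$ (e.g.\ adding $2\pi i$ to $U$ when $\beta_n=n$ leaves all hypotheses intact), so instead of claiming $U(0)=V(0)=0$ is forced, replace $U$, $V$, $\varepsilon_n$ by $U-U(0)$, $V-V(0)$, $\varepsilon_n-\varepsilon_n(0)$, which changes no derivative and keeps $\varepsilon_n=O(\kappa_n^{-1})$ uniformly with $\varepsilon_n(0)=0$.
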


\subsection{Proof of Theorem \ref{thm:main}}

To apply Hwang's quasi-powers theorem and prove Theorem \ref{thm:main}, the key step is to obtain an asymptotic expansion for the normalized probability generating function $\mathbb{E}(q^{X_n})$ of $X_n$. To this end, we first consider the bivariate exponential generating function $F(z,q)$ given by \eqref{eq:F-def}. Our goal is to express $F(z,q)$ explicitly in terms of the Cayley tree function and then analyze its dominant singularity uniformly in $q$.

Josuat-Vergès \cite[Theorem 4.2]{Josuat-Verges-2015} showed that
\begin{equation}\label{eq:F-explicitaa}
\sum_{n\ge 0} G_n(x)\frac{z^n}{n!}
= \T\left( \frac{z+x}{1+x}\exp\left(-\frac{x}{1+x}\right) \right),
\end{equation}
where  \(G_0(x-1)=\frac{x-1}{x}=1-\frac{1}{x}\) and \(G_n(x-1)=R_n(x)\) for \(n\geq 1\).

Setting \(x=q-1\) in \eqref{eq:F-explicitaa} and shifting the constant term to the right-hand side yields
\begin{equation}\label{eq:F-explicit}
F(z,q)=\sum_{n\ge 1} R_n(q)\frac{z^n}{n!}
= \frac1q-1+\T\left( e^{1/q-1}\frac{z+q-1}{q}\right).
\end{equation}

The following uniform \(\Delta\)-domains of $F(z,q)$  are needed in order to apply the transfer
theorem  uniformly with respect to the parameter \(q\).

\begin{lem}\label{lem:unique-dominant-singularity}
There exists \(\varepsilon>0\) such that, whenever \(|q-1|\le \varepsilon\), the function \(F(z,q)\), as a function of \(z\), has the unique dominant singularity \(z=\rho(q)\). Moreover, one can choose a family of \(\Delta\)-domains, uniformly in \(q\),
\begin{equation}\label{defi:deltaq}
\Delta_q
=
\left\{
z \mid \ |z|<|\rho(q)|+\eta,\ z \neq \rho(q),\ \left|\arg\left(\frac{z}{\rho(q)}-1\right)\right|>\theta
\right\},
\end{equation}
where \(\eta>0\) and \(0<\theta<\pi/2\) are independent of \(q\), such that \(F(z,q)\) is analytic in \(\Delta_q\) .
\end{lem}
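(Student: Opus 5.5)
From \eqref{eq:F-explicit}, $F(z,q)=\frac1q-1+\T(w(z,q))$ with the affine map
\[
w(z,q)=e^{1/q-1}\,\frac{z+q-1}{q}.
\]
For fixed $q\neq0$ this map is a nonconstant bijection of $\C$. By Proposition~\ref{prop:T-properties}(i), $F(\cdot,q)$ is therefore analytic except on the preimage of the slit $[e^{-1},\infty)$. That preimage is a ray starting at
\[
\rho(q)=q e^{1-1/q}\,e^{-1}-(q-1)=q e^{-1/q}-q+1,
\]
since $w(\rho(q),q)=e^{-1}$. The ray has direction $q e^{1-1/q}$, that is, it equals $\{\rho(q)+t\,q e^{1-1/q}:t\ge0\}$. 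At $q=1$ we get $\rho(1)=e^{-1}$ and the ray $[e^{-1},\infty)$, consistent with $F(z,1)=\T(z)$. Moreover $\rho(q)$ is analytic near $q=1$, and the direction $d(q)=qe^{1-1/q}$ satisfies $d(1)=1$.

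The plan is therefore to show directly that $F(\cdot,q)$ is analytic on $\C\setminus L_q$, where $L_q$ is this ray. The point $\rho(q)$ is a genuine singularity: composing with the affine map, \eqref{treefunctioexpan} gives a square-root expansion in $1-e\,w(z,q)$, and $1-e\,w(z,q)=c(q)(1-z/\rho(q))$ with $c(q)=e^{1/q}\rho(q)/q\neq0$ near $q=1$. Hence $F$ has radius of convergence $|\rho(q)|$. Uniqueness of the dominant singularity then amounts to checking that no point of $L_q$ other than $\rho(q)$ has modulus $|\rho(q)|$. This holds because $|\rho(q)+t d(q)|^2=|\rho(q)|^2+2t\,\mathrm{Re}(\overline{\rho(q)}d(q))+t^2|d(q)|^2$. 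At $q=1$ the middle coefficient is $2e^{-1}>0$, so by continuity it stays positive for $|q-1|\le\varepsilon$, and the modulus is strictly increasing along the ray.

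For the uniform $\Delta$-domains, write $z=\rho(q)(1+u)$. The ray $L_q$ becomes $\{u=t\,d(q)/\rho(q):t\ge0\}$, a ray from $u=0$ in the direction $\arg(d(q)/\rho(q))$. At $q=1$ this argument is $0$, so by continuity $|\arg(d(q)/\rho(q))|\le\theta/2$ for $|q-1|\le\varepsilon$, for a fixed chosen $\theta\in(0,\pi/2)$ (say $\theta=\pi/4$), after shrinking $\varepsilon$. Every $z\neq\rho(q)$ with $|\arg(z/\rho(q)-1)|>\theta$ then lies off $L_q$, and $F(\cdot,q)$ is analytic on all of $\C\setminus L_q$. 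So $\Delta_q$ can be taken with any $\eta>0$, for example $\eta=1$.

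The only delicate point is to make the continuity arguments quantitative and uniform: $\varepsilon$ must be chosen small enough that $\rho(q)$ stays near $e^{-1}$ (in particular nonzero), $d(q)$ stays near $1$, and the two angular and positivity conditions hold simultaneously. Each of these is a finite list of continuous conditions at $q=1$, so a compact disc $|q-1|\le\varepsilon$ suffices.
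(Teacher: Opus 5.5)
Your proposal is correct and follows essentially the same route as the paper: you pull back the slit $[e^{-1},\infty)$ under the affine map to the ray $\{\rho(q)+t\,d(q):t\ge0\}$ with $d(q)=qe^{1-1/q}$, deduce uniqueness of the dominant singularity from $\mathrm{Re}\bigl(\overline{\rho(q)}\,d(q)\bigr)>0$ near $q=1$, and obtain the uniform $\Delta$-domains from continuity of $\arg\bigl(d(q)/\rho(q)\bigr)$ at $q=1$. Your additional remarks are correct refinements of the same argument rather than a different approach: the affine map is invertible for every $q\neq0$, so $F(\cdot,q)$ is analytic on all of $\C$ minus the ray, and hence any $\eta>0$ works.
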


\begin{proof}
Set
\begin{equation}\label{3.6}
    w=\omega(z,q):=A(q)z+B(q),
\end{equation}
where
\[
A(q)=\frac{e^{1/q-1}}{q},
\qquad
B(q)=\frac{e^{1/q-1}(q-1)}{q}.
\]
Both \(A(q)\) and \(B(q)\) are analytic in a neighborhood of \(q=1\).
Since \(A(1)=1\), continuity of \(A(q)\) at \(q=1\) implies that \(A(q)\ne0\) for all \(q\) in a sufficiently small complex neighbourhood of \(1\). By Proposition \ref{prop:T-properties}  (i) and (ii), the principal branch of the Cayley tree function \(\T(z)\) is analytic in
\(\C\setminus[e^{-1},\infty)\) and its dominant singularity is the square-root branch point \(z=e^{-1}\). From the explicit representation \eqref{eq:F-explicit}, the possible obstructions to analytic continuation of the chosen branch of \(F(z,q)\) are contained in the preimage of the slit \([e^{-1},\infty)\) under the affine map \(z\mapsto\omega(z,q)\).

The preimage of the endpoint \(w=e^{-1}\) is determined by
\[
A(q)z+B(q)=e^{-1}.
\]
Hence
\[
z=\rho(q):=\frac{e^{-1}-B(q)}{A(q)}
=
1-q+qe^{-1/q}.
\]
Evidently, \(\rho(q)\) is analytic in a neighbourhood of \(q=1\) and \(\rho(1)=e^{-1}>0\)  .

Next we describe the whole preimage of the slit. Set 
\(
w(z, q)=e^{-1}+\lambda\) with \(\lambda\ge0\).
By \eqref{3.6}, we have 
\[
A(q)z+B(q)=e^{-1}+\lambda.
\]
Hence, for $\lambda\ge0$,  we have \[
z=\rho(q)+\lambda d(q), \qquad d(q)=\frac1{A(q)}.
\]
Thus, the slit in the \(w\)-plane pulls back to the ray
\[
L_q=
\left\{
\rho(q)+\lambda d(q):\lambda\ge0
\right\}.
\]

At \(q=1\), we have
\(
\rho(1)=e^{-1}, d(1)=1.
\)
Therefore
\[
\operatorname{Re}\bigl(\overline{\rho(1)}d(1)\bigr)=e^{-1}>0.
\]
By continuity of the function
$\operatorname{Re}\!\left(\overline{\rho(q)}\,d(q)\right)$
at \(q=1\), there exists $\varepsilon>0$ such that
\(
\operatorname{Re}\bigl(\overline{\rho(q)}d(q)\bigr)>0
\)
for all \(|q-1|\le\varepsilon\). Consequently, at this moment, for every \(\lambda>0\),
\[
\begin{aligned}
|\rho(q)+\lambda d(q)|^2
&=
|\rho(q)|^2
+
2\lambda\operatorname{Re}\bigl(\overline{\rho(q)}d(q)\bigr)
+
\lambda^2|d(q)|^2\\
&>
|\rho(q)|^2.
\end{aligned}
\]
Thus, the endpoint \(z=\rho(q)\) is the unique point of minimal modulus on \(L_q\). Since $A(q)\neq 0$, the representation \eqref{eq:F-explicit} and
Proposition \ref{prop:T-properties}(ii) imply that $z=\rho(q)$ is a
square-root branch point of $F(z,q)$. Hence \(z=\rho(q)\) is the unique dominant singularity of \(F(z,q)\).

It remains to justify that $F(z,q)$ is analytic in $\Delta_q$ given by \eqref{defi:deltaq}. Since
\[
\frac{d(1)}{\rho(1)}=e>0,
\]
by continuity of the function  ${d(q)/\rho(q)}$ at $q=1$, there exists  \(\varepsilon>0\) and \(0<\theta<\pi/2\) such that
\(
\left|\arg\frac{d(q)}{\rho(q)}\right|\leq\theta
\)
for all \(|q-1|\le\varepsilon\). Therefore \(\forall z \in L_q \setminus \{\rho(q)\}\),
\[
\left|\arg\left(\frac{z}{\rho(q)}-1\right)\right|=\left|\arg\lambda\frac{d(q)}{\rho(q)}\right|=\left|\arg\frac{d(q)}{\rho(q)}\right|\le\theta.
\]
Since \(q\) ranges over a sufficiently small compact neighbourhood of \(1\), the constants \(\eta\) and \(\theta\) can be chosen uniformly. 
Since $\rho(q)\notin\Delta_q$, it follows that
$\Delta_q\cap L_q=\varnothing$.
Hence 
$F(z,q)$ is analytic in $\Delta_q$.
This completes the proof.
\end{proof}

Define
\begin{equation}\label{eq:rho-kappa}
\rho(q)=1-q+qe^{-1/q},
\qquad
\kappa(q)=\frac{e^{1/q}\rho(q)}{q}.
\end{equation}
Then \(\rho(1)=e^{-1}\) and \(\kappa(1)=1\).

\begin{lem}\label{lem:uniform-coeff}
There exists \(\varepsilon>0\) such that, as \(n\to\infty\), uniformly for \(|q-1|\le\varepsilon\),
\begin{equation}\label{eq:coeff-asympt}
R_n(q)
=
\frac{n!}{\sqrt{2\pi}}\sqrt{\kappa(q)}\,
\rho(q)^{-n}n^{-3/2}
\left(1+O(n^{-1})\right),
\end{equation}
where $\rho(q)$ and $\kappa(q)$ are given in \eqref{eq:rho-kappa}.
\end{lem}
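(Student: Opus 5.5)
The plan is to read off a singular expansion of \(F(z,q)\) at \(z=\rho(q)\) directly from \eqref{eq:F-explicit} and the Puiseux expansion \eqref{treefunctioexpan}, and then transfer it to coefficients uniformly in \(q\), using the uniform \(\Delta\)-domains of Lemma~\ref{lem:unique-dominant-singularity}.

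\textbf{Step 1: the singular expansion.} With \(w=\omega(z,q)=A(q)z+B(q)\) as in the proof of Lemma~\ref{lem:unique-dominant-singularity}, we have \(w-e^{-1}=A(q)(z-\rho(q))\). Hence
\[
1-ew=eA(q)\rho(q)\left(1-\frac{z}{\rho(q)}\right)=\kappa(q)\left(1-\frac{z}{\rho(q)}\right),
\]
because \(eA(q)\rho(q)=e^{1/q}\rho(q)/q=\kappa(q)\). Substituting into \eqref{treefunctioexpan} and \eqref{eq:F-explicit} gives
\[
F(z,q)=\frac1q-\sqrt{2\kappa(q)}\left(1-\frac{z}{\rho(q)}\right)^{1/2}+\frac23\kappa(q)\left(1-\frac{z}{\rho(q)}\right)+O\left(\left(1-\frac{z}{\rho(q)}\right)^{3/2}\right)
\]
as \(z\to\rho(q)\) in \(\Delta_q\). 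Since \(\kappa(1)=1\), we may shrink \(\varepsilon\) so that \(\kappa(q)\) stays near \(1\); then \(\sqrt{\kappa(q)}\) is the principal branch and is analytic. The \(O\)-term is uniform in \(q\). The reason is that the remainder in \eqref{treefunctioexpan} is a single fixed function of \(w\), bounded by \(C|1-ew|^{3/2}\) for \(w\) near \(e^{-1}\) in a \(\Delta\)-domain. Moreover, \(|1-ew|\) and \(|1-z/\rho(q)|\) differ only by the bounded factor \(|\kappa(q)|\). Finally, \(\omega(\cdot,q)\) maps \(\Delta_q\) into a fixed \(\Delta\)-domain at \(e^{-1}\) for all \(|q-1|\le\varepsilon\), which follows from the argument bound on \(d(q)/\rho(q)\) in that lemma.

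\textbf{Step 2: coefficients of the explicit terms.} For \(n\ge2\), the constant and linear terms contribute nothing to \([z^n]\). For the square-root term, Theorem~\ref{Standard function scale} with \(\alpha=-1/2\) and \(\Gamma(-1/2)=-2\sqrt\pi\) gives
\[
[z^n]\Bigl(-\sqrt{2\kappa(q)}(1-z/\rho(q))^{1/2}\Bigr)=\frac{\sqrt{2\kappa(q)}}{2\sqrt\pi}\rho(q)^{-n}n^{-3/2}\bigl(1+O(n^{-1})\bigr).
\]
The prefactor equals \(\sqrt{\kappa(q)}/\sqrt{2\pi}\). The \(O(n^{-1})\) here has an absolute constant, since it depends only on \(\alpha\). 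For the remainder term, Theorem~\ref{thm:transfer-big-o} applied to the rescaled function \(u\mapsto E(\rho(q)u,q)\) gives \(O(\rho(q)^{-n}n^{-5/2})\), where \(E\) denotes the remainder. Multiplying by \(n!\) then yields \eqref{eq:coeff-asympt}.

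\textbf{Main obstacle: uniformity of the transfer in \(q\).} Theorem~\ref{thm:transfer-big-o} is stated for a single function. I would handle this by rerunning its Cauchy-integral proof on the rescaled variable \(u=z/\rho(q)\). Under this rescaling, \(\Delta_q\) becomes \(\{|u|<1+\eta/|\rho(q)|,\ u\ne1,\ |\arg(u-1)|>\theta\}\), which contains one fixed \(\Delta(\theta,1+\eta')\) for all \(|q-1|\le\varepsilon\). The standard Hankel-type contour inside this fixed domain then produces constants depending only on \(\theta\), \(\eta'\), the uniform \(O\)-constant \(C\) near \(u=1\), and a uniform bound for \(|E|\) on the outer arc.

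That last bound needs one more argument. By Lemma~\ref{lem:unique-dominant-singularity}, \(F\) is analytic and jointly continuous in \((z,q)\) on the compact set of pairs where \(z\) lies on the contour away from \(\rho(q)\) and \(|q-1|\le\varepsilon\). The explicit singular terms are also continuous there, so \(E\) is bounded on this set. Once these two uniform bounds are in hand, the transfer estimate, and hence \eqref{eq:coeff-asympt}, holds uniformly for \(|q-1|\le\varepsilon\).
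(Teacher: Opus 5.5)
Your proposal is correct and follows the paper's proof essentially step by step: substitute the affine identity $1-ew=\kappa(q)(1-z/\rho(q))$ into the Puiseux expansion of $\T$, apply the standard function scale with $\alpha=-1/2$ to the square-root term, and apply the Big-Oh transfer to the $O((1-z/\rho(q))^{3/2})$ remainder. The only difference is that you actually justify the uniformity in $q$ (rescaling to $u=z/\rho(q)$, one fixed $\Delta$-domain in $u$, and a compactness bound on the outer contour), which the paper only asserts from Lemma~\ref{lem:unique-dominant-singularity}. One small caveat: your side claim that $\omega(\cdot,q)$ maps $\Delta_q$ into a fixed $\Delta$-domain at $e^{-1}$ needs $|\arg(d(q)/\rho(q))|$ strictly below $\theta$. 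A further shrinking of $\varepsilon$ gives this, and the claim is not really needed anyway, since the Puiseux remainder bound holds in a full slit disc around $e^{-1}$.
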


\begin{proof}  Under the affine substitution adopted in \eqref{3.6}, we have
\begin{equation}\label{eq:linear}
1-ew(z ,q)
=1-e(e^{1/q-1}\frac{z+q-1}{q})=
\kappa(q)\left(1-\frac z{\rho(q)}\right).
\end{equation}
Substituting this relation into \eqref{treefunctioexpan} from Proposition \ref{prop:T-properties}, together with the identity \eqref{eq:F-explicit}, we further deduce that
\[
F(z,q)
=
H_1(z,q)
-
\sqrt{2\kappa(q)}
\left(1-\frac{z}{\rho(q)}\right)^{1/2}
+
O\left(
\left(1-\frac{z}{\rho(q)}\right)^{3/2}
\right),
\]
where
\[
H_1(z,q)
=
\frac1q+
\frac23\kappa(q)
\left(1-\frac{z}{\rho(q)}\right).
\]
In particular, \(H_1(z,q)\) is analytic in \(z\) at
\(z=\rho(q)\). By Lemma~\ref{lem:unique-dominant-singularity}, this expansion holds in a family of \(\Delta\)-domains uniformly in \(q\).

For \(n\ge2\), the analytic term \(H_1(z,q)\) has zero \(z^n\)-coefficient.
Hence only the singular term and the remainder have to be considered.

By Theorem~\ref{Standard function scale}, we have
\[
\begin{aligned}
[z^n]\left(1-\frac{z}{\rho(q)}\right)^{1/2}
&=
\frac{\rho(q)^{-n}n^{-3/2}}{\Gamma(-1/2)}
\left(
1+\frac{3}{8n}+O(n^{-2})
\right) \\
&=
\frac{\rho(q)^{-n}n^{-3/2}}{\Gamma(-1/2)}
\left(1+O(n^{-1})\right).
\end{aligned}
\]
On the other hand, by Theorem
\ref{thm:transfer-big-o},
\[
[z^n]
O\left(
\left(1-\frac{z}{\rho(q)}\right)^{3/2}
\right)
=
O\left(
\rho(q)^{-n}n^{-5/2}
\right).
\]

Since
\[
\frac{\rho(q)^{-n}n^{-5/2}}
{\rho(q)^{-n}n^{-3/2}}
=
n^{-1},
\]
the remainder is of relative order \(O(n^{-1})\) with respect to the
leading term.

By Lemma~\ref{lem:unique-dominant-singularity}, the corresponding \(\Delta\)-domains and the constants in
the local expansions can be chosen uniformly for \(q\) near \(1\).
Consequently, the above coefficient estimates are uniform in \(q\).
As \(n\to\infty\), we thus obtain
\begin{equation} \label{eq:coeff-asymptaa}
[z^n]F(z,q)
=
-\frac{\sqrt{2\kappa(q)}}{\Gamma(-1/2)}
\rho(q)^{-n}n^{-3/2}
\left(1+O(n^{-1})\right),
\end{equation}
uniformly for \(q\) in a neighbourhood of \(1\).

Substituting 
\[
\Gamma(-1/2)=-2\sqrt{\pi}, \quad  R_n(q)=n![z^n]F(z,q),
\]
into \eqref{eq:coeff-asymptaa} yields the asymptotic estimate \eqref{eq:coeff-asympt}. This completes the proof.
\end{proof}

We are now in a position to derive the asymptotic expansion of the normalized probability generating function \(\mathbb{E}(q^{X_n})\) by virtue of Lemma \ref{lem:uniform-coeff}.

\begin{thm} \label{thm:quasi} Let $X_n$ denote the number of improper edges in a
uniformly random labeled rooted  tree on $[n]$. 
There exists \(\delta>0\) such that, uniformly for \(|s|\le\delta\), as \(n\to\infty\),
\begin{equation}\label{ }
\mathbb E(e^{sX_n})
=
\sqrt{\kappa(e^s)}
\left(
\frac{\rho(1)}{\rho(e^s)}
\right)^n
\left(1+O(n^{-1})\right),
\end{equation}
where $\rho(q)$ and $\kappa(q)$ are given in \eqref{eq:rho-kappa}.

\end{thm}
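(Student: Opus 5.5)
The plan is to derive the statement directly from Lemma~\ref{lem:uniform-coeff}, applied at the variable point $q=e^s$ and at the fixed point $q=1$, and then take the ratio. First, fix $\varepsilon>0$ as in Lemma~\ref{lem:uniform-coeff}. Choose $\delta>0$ so that $|s|\le\delta$ implies $|e^s-1|\le\varepsilon$; this is possible because $s\mapsto e^s$ is continuous at $0$, and in fact $|e^s-1|\le e^{\delta}-1$. With $q=e^s$ we have
\[
\mathbb E(e^{sX_n})=\mathbb E\bigl(q^{X_n}\bigr)=\frac{R_n(q)}{R_n(1)}.
\]
Lemma~\ref{lem:uniform-coeff} gives the numerator with a uniform $O(n^{-1})$ error for $|q-1|\le\varepsilon$. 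Since $q=1$ lies in this disc, the same lemma also gives the denominator. As a consistency check, at $q=1$ we have $\kappa(1)=1$ and $\rho(1)=e^{-1}$, so the lemma reads $R_n(1)=\frac{n!}{\sqrt{2\pi}}e^n n^{-3/2}(1+O(n^{-1}))$. This agrees with Stirling's formula applied to $n^{n-1}$, which confirms the normalization.

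Next I would divide the two expansions. The common factor $n!\,n^{-3/2}/\sqrt{2\pi}$ cancels, leaving
\[
\frac{R_n(e^s)}{R_n(1)}=\frac{\sqrt{\kappa(e^s)}}{\sqrt{\kappa(1)}}\left(\frac{\rho(1)}{\rho(e^s)}\right)^{n}\frac{1+O(n^{-1})}{1+O(n^{-1})}.
\]
Because $\kappa(1)=1$, the prefactor is just $\sqrt{\kappa(e^s)}$. The error in the denominator does not depend on $s$, so for $n$ large it is bounded away from $0$, and $(1+O(n^{-1}))^{-1}=1+O(n^{-1})$. Multiplying two such factors again gives $1+O(n^{-1})$, and the implied constant is uniform in $|s|\le\delta$.

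The remaining point, which I expect to be the only real subtlety, is making sure $\sqrt{\kappa(e^s)}$ and $\rho(e^s)^{-n}$ are well defined and analytic on the disc, as the quasi-powers theorem will later need. Both $\rho(q)=1-q+qe^{-1/q}$ and $\kappa(q)$ are analytic near $q=1$, with $\rho(1)=e^{-1}\ne0$ and $\kappa(1)=1$. Shrinking $\delta$ if necessary, we can ensure $\rho(e^s)\ne0$ and $|\kappa(e^s)-1|<1/2$ for $|s|\le\delta$. Then $\sqrt{\kappa}$ can be taken as the principal branch, which is the same branch implicitly used in Lemma~\ref{lem:uniform-coeff} (it comes from $\sqrt{2\kappa(q)}$ with $\kappa$ near $1$). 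Likewise $(\rho(1)/\rho(e^s))^n$ can be written as $\exp\bigl(n\log(\rho(1)/\rho(e^s))\bigr)$ using the principal logarithm, which is analytic in $s$. With these choices the identity holds uniformly for $|s|\le\delta$, which proves the statement.
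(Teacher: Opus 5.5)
Your proposal is correct and follows essentially the same route as the paper: apply Lemma~\ref{lem:uniform-coeff} at $q=e^s$ and at $q=1$, cancel the common factor $n!\,n^{-3/2}/\sqrt{2\pi}$, use $\kappa(1)=1$, and control the ratio of the two $1+O(n^{-1})$ factors by bounding the $s$-independent denominator away from zero. Your extra remarks on choosing branches for $\sqrt{\kappa(e^s)}$ and $\log\rho(e^s)$ do not appear in the paper's proof, but they are harmless and useful later when the quasi-powers theorem is applied.
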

\begin{proof}
The probability generating function of $X_n$ is
\[
\mathbb{E}(q^{X_n})
=
\sum_{k=0}^{n-1}q^k\mathbb{P}(X_n=k)
=
\frac{R_n(q)}{R_n(1)}.
\]
Setting $q=e^s$, we obtain the moment generating function
\[
M_n(s)
:=
\mathbb{E}(e^{sX_n})
=
\frac{R_n(e^s)}{R_n(1)}.
\]

By Lemma~\ref{lem:uniform-coeff}, there exist constants
$\varepsilon,C>0$ such that, for all sufficiently large $n$,
\[
R_n(q)
=
\frac{n!}{\sqrt{2\pi}}
\sqrt{\kappa(q)}\,\rho(q)^{-n}n^{-3/2}
\bigl(1+\xi_n(q)\bigr),
\qquad |q-1|\leq\varepsilon,
\]
where the remainder satisfies
\[
\sup_{|q-1|\leq\varepsilon}|\xi_n(q)|
\leq \frac{C}{n}.
\]
Choose $\delta>0$ sufficiently small that
$|e^s-1|\leq\varepsilon$ whenever $|s|\leq\delta$.
For all sufficiently large $n$, we have $C/n\leq 1/2$,
and therefore
\[
|1+\xi_n(1)|
\geq 1-|\xi_n(1)|
\geq 1-\frac{C}{n}
\geq \frac12.
\]

Since $\kappa(1)=1$, we obtain
\begin{align*}
M_n(s)
&=
\frac{R_n(e^s)}{R_n(1)}\\
&=
\sqrt{\kappa(e^s)}
\left(\frac{\rho(1)}{\rho(e^s)}\right)^n
\frac{1+\xi_n(e^s)}{1+\xi_n(1)}\\
&=
\sqrt{\kappa(e^s)}
\left(\frac{\rho(1)}{\rho(e^s)}\right)^n
\left(
1+\frac{\xi_n(e^s)-\xi_n(1)}{1+\xi_n(1)}
\right).
\end{align*}
Moreover, uniformly for $|s|\leq\delta$,
\[
\left|
\frac{\xi_n(e^s)-\xi_n(1)}{1+\xi_n(1)}
\right|
\leq
\frac{|\xi_n(e^s)|+|\xi_n(1)|}{|1+\xi_n(1)|}
\leq
\frac{2C/n}{1-C/n}
\leq \frac{4C}{n}.
\]
Consequently, as $n\to\infty$,
\[
M_n(s)
=
\sqrt{\kappa(e^s)}
\left(\frac{\rho(1)}{\rho(e^s)}\right)^n
\bigl(1+O(n^{-1})\bigr),
\]
uniformly for $|s|\leq\delta$.
\end{proof}

We conclude the paper by proving our main result via Theorem \ref{thm:quasi} and Hwang’s quasi-powers theorem \ref{lem:hwang}.

\begin{proof}[Proof of Theorem \ref{thm:main}] 
By Theorem~\ref{thm:quasi}, the moment generating function of \(X_n\) satisfies,
uniformly for \(s\) in a complex neighbourhood of \(0\),
\begin{equation}\label{eq:main-proof-quasi}
M_n(s):=\E( e^{sX_n})
=
\exp\bigl(nU(s)+V(s)\bigr)
\left(1+O(n^{-1})\right),
\end{equation}
where
\[
U(s)=\log \frac{\rho(1)}{\rho(e^s)},
\qquad
V(s)=\frac12\log \kappa(e^s).
\]
Moreover,
\[
U(0)=V(0)=0.
\]
Thus the hypotheses of Theorem~\ref{lem:hwang} will be satisfied once we verify
that \(U''(0)>0\).

We now compute the relevant derivatives. Since
\(
\rho(q)=1-q+qe^{-1/q},
\)
we have
\[
\rho'(q)
=
-1+e^{-1/q}\left(1+\frac1q\right),
\qquad
\rho''(q)
=
\frac{e^{-1/q}}{q^3}.
\]
Therefore
\[
\rho(1)=e^{-1},
\qquad
\frac{\rho'(1)}{\rho(1)}=2-e,
\qquad
\frac{\rho''(1)}{\rho(1)}=1.
\]

Let
\(
D=q\frac{\dd}{\dd q}.
\)
Under the substitution \(q=e^s\), differentiation with respect to \(s\)
corresponds to applying the operator \(D\). Hence
\[
U'(0)
=
-D\log\rho(q)\big|_{q=1}
=
-\frac{\rho'(1)}{\rho(1)}
=
e-2
=
\mu.
\]
Furthermore,
\[
\begin{aligned}
U''(0)
&=
-D^2\log\rho(q)\big|_{q=1}  \\
&=
-\left(
\frac{\rho'(1)}{\rho(1)}
+
\frac{\rho''(1)}{\rho(1)}
-
\left(\frac{\rho'(1)}{\rho(1)}\right)^2
\right)  \\
&=
e^2-3e+1
=
\sigma^2.
\end{aligned}
\]
Also,
\[
\sigma^2=e^2-3e+1>0.
\]

Next, since
 \(
\kappa(q)=\frac{e^{1/q}\rho(q)}{q},\)
we have
\[
\log\kappa(q)=\frac1q+\log\rho(q)-\log q.
\]
It follows that
\[
V(s)
=
\frac12\log\kappa(e^s)
=
\frac12\left(e^{-s}+\log\rho(e^s)-s\right).
\]
Using the values computed above, we obtain
\[
V'(0)
=
\frac12\left(-1+\frac{\rho'(1)}{\rho(1)}-1\right)
=
-\frac e2.
\]
Moreover,
\[
V''(0)
=
\frac12\left(
1+D^2\log\rho(q)\big|_{q=1}
\right).
\]
Since
\[
D^2\log\rho(q)\big|_{q=1}
=
-U''(0)
=
-(e^2-3e+1),
\]
we get
\[
V''(0)
=
\frac12\left(1-e^2+3e-1\right)
=
\frac{e(3-e)}2.
\]
From Theorem ~\ref{lem:hwang}, we have 
\[
\E X_n
=
\mu n-\frac e2+O(n^{-1}),
\]
and
\[
\Var(X_n)
=
\sigma^2 n+\frac{e(3-e)}2+O(n^{-1}).
\]
Moreover, 
\[
\frac{X_n-\mu n}{\sigma\sqrt n}
\xrightarrow{\mathrm d} N(0,1).
\]
This completes the proof.
 
\end{proof}

 \vskip 0.2cm
\noindent{\bf Acknowledgment.} This work
was supported by   the National Science Foundation of China.


\begin{thebibliography}{99}





\bibitem{Berndt-1985}B.~C. Berndt, {\it Ramanujan's notebooks. Part I}, Springer, New York, 1985; MR0781125.

\bibitem{Cayley-1889}
A. Cayley,
A theorem on trees,
Quart. J. Math. 23 (1889), 376--378.

\bibitem{Chen-2026} W.~Y.~C. Chen, Some observations and questions via Maple, talk at AlCoVE: an
Algebraic Combinatorics Virtual Expedition, virtual conference, June 8–9, 2026.
 
  \bibitem{Chen-Fu-Wang-2026} W.~Y.~C. Chen, A.~M. Fu and E.~L. Wang, A grammatical calculus for the Ramanujan polynomials, Ramanujan J., 70 (2026) 35.

 \bibitem{Chen-Yang-2021} W.~Y.~C. Chen, H.~R.~L. Yang, A context-free grammar for the Ramanujan-Shor polynomials, Adv. in Appl. Math., 126 (2021) 101908.


 \bibitem{Chen-Wang-Yang-2011} W.~Y.~C. Chen, L.~X.~W. Wang and A.~L.~B. Yang, Recurrence relations for strongly $q$-log-convex polynomials, Canad. Math. Bull., 54 (2011) 217–229.

  

\bibitem{Corless-Gonnet-Hare-Jeffrey-Knuth-1996}
R.~M. Corless et al., On the Lambert $W$ function, Adv. Comput. Math. {\bf 5} (1996), no.~4, 329--359; MR1414285.


\bibitem{Chyzak-Drmota-Klausner-Kok-2008}F. Chyzak et al., The distribution of patterns in random trees, Combin. Probab. Comput. {\bf 17} (2008), no.~1, 21--59; MR2376422.


\bibitem{Drmota-Gittenberger-1999}M. Drmota and B. Gittenberger, The distribution of nodes of given degree in random trees, J. Graph Theory {\bf 31} (1999), no.~3, 227--253; MR1688949.

\bibitem{Dumont-Ramamonjisoa-1996}D. Dumont and A. Ramamonjisoa, Grammaire de Ramanujan et arbres de Cayley, Electron. J. Combin. {\bf 3} (1996), no.~2, Research Paper 17, approx.\ 18 pp.; MR1392502.





\bibitem{Flajolet-Sedgewick-2009}P. Flajolet and R. Sedgewick, {\it Analytic combinatorics}, Cambridge Univ. Press, Cambridge, 2009; MR2483235.



\bibitem{Hwang-1996}H.~K. Hwang, Large deviations for combinatorial distributions. I. Central limit theorems, Annals of Applied Probability 6 (1996), 297–319. 



\bibitem{Josuat-Verges-2015}M. Josuat-Verg\`es, Derivatives of the tree function, Ramanujan J. {\bf 38} (2015), no.~1, 1--15; MR3396483.


\bibitem{Lin-Zeng-2014}
Z. Lin and J. Zeng,
Positivity properties of Jacobi--Stirling numbers and generalized
Ramanujan polynomials,
Adv. in Appl. Math. {\bf 53} (2014), 12--27.






	\bibitem{Ramanujan-1957} 
S. Ramanujan, \textit{Notebooks}, Vol.~1, Tata Institute of Fundamental Research, Bombay, 1957.


\bibitem{Sokal-2023}
A.~D. Sokal, Total positivity of some polynomial matrices that enumerate labeled trees and forests I: forests of rooted labeled trees, Monatsh. Math. {\bf 200} (2023), no.~2, 389--452; MR4544303.


\bibitem{Shor-1995}P.~W. Shor, A new proof of Cayley’s formula for counting labeled trees, Journal of Combinatorial Theory, Series A 71 (1995), 154–158. 










\bibitem{Zeng-1999} J. Zeng, A Ramanujan sequence that refines the Cayley formula for trees, Ramanujan J. 3 (1999), 45–54.


\end{thebibliography}
\end{document}